\renewcommand{\Re}{\operatorname{Re}}
\def\R{\ensuremath\mathbb{R}}
\def\C{\ensuremath\mathbb{C}}
\def\Z{\ensuremath\mathbb{Z}}
\def\Q{\ensuremath\mathbb{Q}}
\def\H{\ensuremath\mathbb{H}}
\newtheorem{thm}{Theorem}[section]
\newtheorem{defi}[thm]{Definition}
\newtheorem{cor}[thm]{Corollary}
\newtheorem{lemma}[thm]{Lemma}
\theoremstyle{remark}
\newtheorem{remark}[thm]{Remark}
\def\0{\emptyset}
\def\SL{\hbox{\rm SL}}
\def\GL{\hbox{\rm GL}}
\numberwithin{equation}{section}
\begin{document}
\title{A note on additive twists, reciprocity laws and quantum modular forms}
\author{Asbjørn Christian Nordentoft}
\address{Department of Mathematical Sciences, University of Copenhagen}
\subjclass[2010]{11F67(primary)}
\date{September, 2019}
\begin{abstract}
We prove that the central values of additive twists of a cuspidal $L$-function define a quantum modular form in the sense of Zagier, generalizing recent results of Bettin and Drappeau. From this we deduce a reciprocity law for the twisted first moment of multiplicative twists of cuspidal $L$-functions, similar to reciprocity laws discovered by Conrey for the twisted second moment of Dirichlet $L$-functions. Furthermore we give an interpretation of quantum modularity at infinity for additive twists of $L$-functions of weight 2 cusp forms in terms of the corresponding functional equations. 
\end{abstract}
\maketitle
\section{Introduction}
In an unpublished paper \cite[Theorem 10]{Conrey07} Conrey discovered a certain reciprocity law satisfied by the twisted second moment of Dirichlet $L$-functions. The reciprocity law relates the following two twisted moments;
\begin{align}\label{dirichlettwist} \sum_{\chi \text{ mod }q}|L(\chi,1/2)|^2 \chi(l)   \rightsquigarrow \sum_{\chi \text{ mod }l}|L(\chi,1/2)|^2 \chi(-q),\end{align}
for primes $q\neq l$. Conrey's results were then generalized and refined by Young \cite{Young11} and Bettin \cite{Bettin16}. In this paper we prove a reciprocity law for twists of $\GL _2$-cusp forms, which in the simplest case relates the following two twisted first moments;
\begin{align} \label{reciprocity}\sum_{\substack{\chi \text{ mod }q,\\ \chi \text{ primitive}}}  \tau(\overline{\chi})L(f\otimes \chi,k/2) \chi(l) \rightsquigarrow  \sum_{\substack{\chi \text{ mod }l,\\ \chi \text{ primitive}}} \tau(\overline{\chi})L(f\otimes \chi,k/2)\chi(-q),\end{align}
where $f\in \mathcal{S}_k(\Gamma_0(1))$ is a cusp form of weight $k$ and level $1$ and $q\neq l$ are primes. The exact moments involved for general level $N$ and general $q,l$ are more involved (see Theorem \ref{recipr}). \\

Our proof uses an interpretation of the twisted moments (\ref{reciprocity}) in terms of additive twists of $L$-functions. The additive twists associated to a weight $k$ cusp form $f\in \mathcal{S}_k(\Gamma_0(N))$ are defined as
$$ L(f\otimes e(a/c),s):= \sum_{n\geq 1} \frac{a_f(n)e(na/c)}{n^s}, $$
for $\Re s>(k+1)/2$, where $a_f(n)$ denote the Fourier coefficients of $f$, $a/c\in \Q$ and $e(x)=e^{2\pi i x}$. The Dirichlet series above satisfy analytic continuation to the entire complex plane, which satisfy functional equations (see Section \ref{atsec}). The reciprocity law will now follow from the fact that the central value at $s=k/2$ of the additive twists define a {\it quantum modular form} of weight zero in the sense of Zagier \cite{Zagier10} (see Theorem \ref{quantumgeneral} for the precise statement).\\ 
The inspiration to this approach comes from Bettin's work \cite{Bettin16}, which gives an interpretation of the twisted first second moment (\ref{dirichlettwist}) in terms of the central value of the Estermann zeta function defined for $\Re s>1$ by
$$ D(a,c;s):= \sum_{n\geq 1} \frac{d(n)e(na/c)}{n^s}, $$
where $d(n)$ denotes the number of divisors of $n$. One can think of this as an additive twist of $\zeta(s)^2$, which in turn is the $L$-function associated to the $\GL_2$-object $\frac{\partial}{\partial s} E(z,s)_{s=1/2}$. Bettin's results can be interpreted as showing that $D(a/c;1/2):=D(a,c;1/2)$ defines a quantum modular form \cite[Theorem 1]{Bettin16}.\\
Recently Bettin and Drappeau \cite[Lemma 8.3]{BeDr19} showed quantum modularity in the case of level 1 cusp forms, which they ingeniously combined with dynamical methods to determine the limiting distribution of the central values of additive twists of $L$-functions of level 1 cusp forms. The results of this paper however extend the quantum modularity proved by Bettin and Drappeau to general discrete and co-finite subgroups of $\SL_2(\R)$ with a cusp at infinity (with an appropriate definition of quantum modularity). Quantum modularity for general levels will be needed if one wants to extend the methods of Bettin and Drappeau (see the remark on page 8 of \cite{BeDr19}).
\begin{remark}
A different proof of the limiting distribution of central values of additive twists was obtained by the author \cite{No19} using the theory of Eisenstein series twisted by modular symbols. The methods furthermore apply to cusp forms for congruence subgroups of general level and even general Fuchsian groups of the first kind with a cusp at $\infty$.
\end{remark}

Finally we discuss quantum modularity at $\infty$ and show that for weight 2 cusp forms, this is in a certain sense equivalent to the functional equation at the central point for the additive twists of the associated $L$-function. 

\section{Statement of results}
In order to state our results, let $f\in \mathcal{S}_k(\Gamma_0(N))$ be a new form of even weight $k$ and level $N$. Let $\omega_f$ denote the eigenvalue under the Fricke involution $W$ (see (\ref{fricke}) below) and let
$$ f(z)=\sum_{n\geq 1} a_f(n) q^n, \quad q=e^{2\pi i z},$$
be the Fourier expansion. Then given a Dirichlet character $\chi$, we consider the following twisted $L$-function; 
$$L(f\otimes \chi,s)= \sum_{n\geq 1} \frac{a_f(n)\chi(n)}{n^s},$$
normalized so that the central value is at $s=k/2$. Our result is the following reciprocity law for a certain appropriately weighted twisted first moment. 
\begin{thm} \label{recipr}
Let $f\in \mathcal{S}_k(\Gamma_0(N))$ be a new form of even weight $k$ and level $N$. Then we have for any pair of integers $0<l<q$ with $(q,Nl)=1$ that
\begin{align}
\label{mainreci}&\frac{1}{\varphi(q)} \sum_{\chi \text{ mod }q}\tau(\overline{\chi}^*)\nu(f, \chi^*,q/c(\chi)) L(f\otimes \chi^*,k/2)\chi(l)\\
\nonumber &  \qquad \qquad -\frac{\omega_f}{\varphi(lN)} \sum_{\chi \text{ mod }lN}\tau(\overline{\chi}^*)\nu(f, \chi^*,lN/c(\chi)) L(f\otimes \chi^*,k/2)\chi(-q)\\
\nonumber &\qquad \qquad \qquad \qquad\qquad \qquad \qquad \qquad =  L(f,k/2)+O_f(l/q),
\end{align}
where $\chi^*$ mod $c(\chi)$ denotes the conductor of the Dirichlet character $\chi$ and the arithmetic weights $\nu$ are given by
$$ \nu(f, \chi, n):= \sum_{\substack{n_1n_2n_3=n,\\ (n_1,q)=1}}\chi(n_1) \mu(n_1)\overline{\chi}(n_2) \mu(n_2) a_f(n_3)n_3^{1-k/2}.  $$
\end{thm}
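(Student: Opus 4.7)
The plan is to reduce both twisted moments in (\ref{mainreci}) to values of additive twists at $s=k/2$, and then apply the quantum modularity statement of Theorem~\ref{quantumgeneral} to the Fricke involution.

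For the reduction, I would start from Gauss-sum inversion: for a primitive character $\chi^*$ modulo $c=c(\chi)$ with $(c,N)=1$,
$$\tau(\overline{\chi}^*)\,L(f\otimes\chi^*,s)=\sum_{a\,\bmod\,c}\overline{\chi}^*(a)\,L(f\otimes e(a/c),s).$$
Substituting into the first character sum of (\ref{mainreci}), interchanging the order of summation, and invoking orthogonality of the characters modulo $q$ in the inner sum detects the congruence $a\equiv l\pmod{q}$ and collapses the whole expression to $L(f\otimes e(l/q),k/2)$, provided we correct for the discrepancy between the Dirichlet series $\sum_n a_f(n)\chi(n)n^{-s}$ and $L(f\otimes\chi^*,s)$ when $\chi$ is not primitive. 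This correction is the purpose of the arithmetic weights $\nu(f,\chi^*,q/c(\chi))$: the factor $a_f(n_3)n_3^{1-k/2}$ is exactly the Dirichlet coefficient of the missing Euler factors at $s=k/2$, and the double Möbius sum in $n_1,n_2$ inverts the relation between $L(f\otimes\chi,s)$ and $L(f\otimes\chi^*,s)$ at the primes dividing $q/c(\chi)$. Running the same computation at modulus $lN$ identifies the second moment with $L(f\otimes e(-q/(lN)),k/2)$.

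It then suffices to prove
$$L(f\otimes e(l/q),k/2)-\omega_f\,L(f\otimes e(-q/(lN)),k/2)=L(f,k/2)+O_f(l/q).$$
The Fricke involution $W_N\colon z\mapsto-1/(Nz)$ sends $l/q$ to $-q/(lN)$, and $\omega_f$ is precisely the eigenvalue of $f$ under $W_N$. Hence the left-hand side above is the quantum-modular cocycle $h_{W_N}(l/q)$ measuring the failure of $x\mapsto L(f\otimes e(x),k/2)$ to be $W_N$-invariant. Theorem~\ref{quantumgeneral} guarantees that $h_{W_N}$ extends to a continuously differentiable function on $\R_{>0}$ up to the cusp $0$. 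Evaluating the boundary value, the second additive twist degenerates (as $x\to 0^+$) into the untwisted central value $L(f,k/2)$, while a first-order Taylor expansion produces the error $O_f(l/q)$.

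The principal obstacle is the algebraic bookkeeping in the reduction step: one must verify that the weights $\nu(f,\chi^*,q/c(\chi))$ are exactly what is needed to absorb the local-factor mismatch between $L(f\otimes\chi^*,s)$ and the $L$-series twisted by the non-primitive $\chi$, uniformly in the conductor $c(\chi)$, including at primes dividing $N$. Once this identity is established, the passage to additive twists is routine, and the conclusion of the theorem follows formally from the quantum modularity of Theorem~\ref{quantumgeneral} together with the identification of $L(f,k/2)$ as the boundary value of the cocycle $h_{W_N}$ at $0$.
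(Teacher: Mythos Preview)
Your approach is essentially the paper's: identify each twisted moment with a central additive twist via the Birch--Stevens formula (Corollary~\ref{Additivetwist}), then relate the two additive twists through the Fricke involution. Two points need correction, however.

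First, you invoke Theorem~\ref{quantumgeneral}, but the Fricke matrix $H_N$ does not lie in $\Gamma_0(N)$ for $N>1$, so that theorem does not apply. What you need is the companion statement for the Fricke involution, Theorem~\ref{quantumhecke}, whose proof is parallel but uses the eigenvalue $\omega_f$.

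Second, your error-term argument is not quite right as stated. Abstract quantum modularity only gives continuity of the cocycle on $\R\setminus\{0,\infty\}$; it does \emph{not} by itself yield an extension to the cusp $0$, nor a first-order Taylor bound there. (Indeed, the paper shows in Section~4.2 that extension to $\infty$ is delicate and only works for $k=2$.) What the paper actually does is use the \emph{explicit} formula (\ref{exactformula2}) with $r=-q/(lN)$: each of the finitely many correction terms is of size $O_f(|r|^{-j})$ for $j\ge 1$, since $|L(f\otimes e(r),k/2+j)|\le \sum_n |a_f(n)|n^{-k/2-1}<\infty$ uniformly in $r$, and this gives $O_f(l/q)$ directly. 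Your ``Taylor expansion at $0$'' is really this explicit estimate in disguise (after the change of variable $x=-1/(Nr)$), but it must be justified via the exact formula rather than by an appeal to continuity or differentiability of the cocycle.
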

\begin{remark}The reason for the specific shape of the arithmetic weights $\nu(f, \chi, n)$ is due to Lemma \ref{BirchSteven} below. Note that for primitive characters $\chi$, this factor is equal to 1.\end{remark}
In the simplest case where $q,l$ are prime and the level is 1, we get the following precise version of (\ref{reciprocity}).
\begin{cor}\label{cor1}
Let $f\in \mathcal{S}_k(\Gamma_0(1))$ be a cusp form of weight $k$ and level 1. Then we have for primes $ l<q$;
\begin{align*}
\frac{1}{\varphi^*(q)}\sum_{\substack{\chi \text{ mod }q\\ \chi \text{ primitive}}}  \tau(\overline{\chi})L(f\otimes \chi,k/2)\chi(l)-\frac{1}{\varphi^*(l)}\sum_{\substack{\chi \text{ mod }l\\ \chi \text{ primitive}}} \tau(\overline{\chi})L(f\otimes \chi,k/2) \chi(-q)\\
= L(f,k/2)+O_f(l/q+1/\sqrt{l}),
\end{align*}
where the sums are taken over all primitive characters mod $q$ and $l$ respectively and $\varphi^*(q)=q-2$ denotes the number of primitive characters mod $q$.
\end{cor}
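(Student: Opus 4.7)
The plan is to deduce Corollary \ref{cor1} by specializing Theorem \ref{recipr} to $N = 1$ and prime moduli $q, l$, and then performing some bookkeeping. First, for $N = 1$ the Fricke involution $W_1 = \bigl(\begin{smallmatrix}0 & -1\\ 1 & 0\end{smallmatrix}\bigr)$ lies in $\SL_2(\Z) = \Gamma_0(1)$, so $\omega_f = 1$; this accounts for the absence of $\omega_f$ in the corollary.

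Next I would split each character sum in Theorem \ref{recipr} into the contribution of primitive characters plus that of the principal character. Since $q$ is prime, the $q-2$ non-principal characters mod $q$ are automatically primitive; for such $\chi$ one has $\chi^* = \chi$, $c(\chi) = q$, $q/c(\chi) = 1$, and $\nu(f,\chi,1) = 1$, producing the desired sum over primitive characters (with weight $\tfrac{1}{q-1}$ rather than $\tfrac{1}{q-2}$). For the principal character $\chi_0$ mod $q$, one has $\chi_0^* = \chi_1$ (trivial mod $1$), $\tau(\bar\chi_1) = 1$, $L(f\otimes\chi_1, k/2) = L(f, k/2)$, $\chi_0(l) = 1$, and the condition $(n_1, q) = 1$ forces $n_1 = 1$, so
$$\nu(f, \chi_1, q) = \sum_{n_2 n_3 = q} \mu(n_2) a_f(n_3) n_3^{1-k/2} = a_f(q)q^{1-k/2} - 1.$$
By Deligne's bound $|a_f(q)| \leq 2q^{(k-1)/2}$ this principal contribution, after division by $\varphi(q) = q-1$, is $O_f(q^{-1/2}) \subset O_f(l^{-1/2})$; the same analysis applied to the $l$-sum gives $\nu(f,\chi_1,l) = a_f(l)l^{1-k/2} - 2$ and a principal-character contribution of size $O_f(l^{-1/2})$.

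Finally I would switch the normalization from $1/\varphi$ to $1/\varphi^*$. The multiplicative error is $\tfrac{1}{q-1} - \tfrac{1}{q-2} = O(q^{-2})$, and by Cauchy--Schwarz together with a standard mean-square bound $\sum_{\chi \text{ prim mod } q}|L(f\otimes\chi,k/2)|^2 \ll_{f,\epsilon} q^{1+\epsilon}$ the primitive sum is $\ll_{f,\epsilon} q^{3/2+\epsilon}$, so the renormalization contributes $O_{f,\epsilon}(q^{-1/2+\epsilon})$, which is absorbed into $O_f(l/q + 1/\sqrt l)$ for $\epsilon$ small enough; the $l$-side is analogous. Combining with the $O_f(l/q)$ error already present in Theorem \ref{recipr} yields the corollary. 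The only real obstacle is this routine bookkeeping — in particular confirming via Deligne that the explicit principal-character term $a_f(q)q^{1-k/2} - 1$ is genuinely of lower order rather than a hidden main term — and no further analytic input is needed.
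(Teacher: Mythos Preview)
Your argument is correct and matches the paper's: specialize Theorem \ref{recipr} to $N=1$ with $q,l$ prime, strip off the principal-character contribution via Deligne's bound (the paper gets $\nu(f,1,l)=a_f(l)l^{1-k/2}-1$ rather than your $-2$, because the coprimality constraint in $\nu$ is meant to be taken relative to the modulus of the current sum, not the fixed $q$---but your value is equally harmless), and then pass from $1/\varphi$ to $1/\varphi^*$. For this last step the paper records only the identity $1/\varphi(q)=1/\varphi^*(q)+O(q^{-2})$ without bounding the primitive sum explicitly; your Cauchy--Schwarz plus second-moment input is a clean way to make that renormalization rigorous.
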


\section{Twisted first moments and additive twists} 
In this section we will introduce the additive twists of a cuspidal $L$-function and furthermore for congruence subgroups show a connection to the twisted first moments in (\ref{reciprocity}) using a formula due to Birch and Stevens.
\subsection{Additive twists}\label{atsec}
We refer to \cite[Section 3.3]{No19} for a more detailed account.\\ 

Let $\Gamma$ be any discrete and co-finite subgroup of $\SL_2(\R)$ with a cusp at $\infty$ of width 1 (see \cite[Chapter 2]{Iw}) and consider a cusp form $f\in \mathcal{S}_k(\Gamma)$ of even weight $k$ with Fourier expansion (at $\infty$) given by
$$  f(z)=\sum_{n\geq 1} a_f(n)q^n. $$
Then for $r\in \R$ we define the {\it additive twist} of the $L$-function of $f$ as
\begin{align}\label{additivetwist} L(f\otimes e(r),s):= \sum_{n\geq 1} \frac{a_f(n)e(nr)}{n^s}, \end{align}
which converges absolutely for $\Re s>(k+1)/2$ by Hecke's bound;
\begin{align}\label{hecke}\sum_{n\leq X} |a_f(n)|^2\ll X^k.\end{align}
Thus the Dirichlet series (\ref{additivetwist}) defines a continuous function in $r$ for $\Re s>(k+1)/2$. \\
Furthermore if $r$ corresponds to a cusp of $\Gamma$, then $L(f\otimes e(r),s)$ admits analytic continuation to the entire complex plane, given by the following integral representation
\begin{align}\label{integralrep} L(f\otimes e(r),s) = \frac{(2\pi)^{s}}{ \Gamma(s)} \int_0^\infty f(r+iy) y^{s-1}dy,  \end{align}
well-defined for all $s\in \C$.\\
Finally if $r$ is in the $\Gamma$-orbit of $\infty$, say $r=\gamma \infty$, then we have the following functional equation; 
\begin{align}\nonumber \Lambda (f\otimes e(\gamma \infty),s)&:= \left(\frac{c}{2\pi}\right)^{s} \Gamma(s)L(f\otimes e(\gamma \infty),s)\\
\label{FE}&= (-1)^{k/2} \Lambda (f\otimes e(\gamma^{-1}\infty),k-s),   \end{align}
where $c$ is the lower-left entry of $\gamma$.
\subsection{The Birch--Stevens formula}
In the special case where $\Gamma=\Gamma_0(N)$, the set of cusps corresponds to $\Q\cup \{\infty\}$ and the classical Birch--Stevens formula \cite[Eq. 2.2]{Po14} expresses the central value $L(f\otimes \chi,k/2)$ for a primitive Dirichlet character $\chi$ mod $q$ in terms of additive twists;
\begin{align}  \tau(\overline{\chi}) L(f\otimes \chi,k/2) =\frac{1}{\varphi(q)} \sum_{a\in (\Z/q\Z)^\times} L(f\otimes e(a/q),k/2)\overline{\chi(a)}, \end{align}
where $\tau(\overline{\chi})$ denotes the Gauss-sum of $\overline{\chi}$.\\ 
For $k=2$ the central values $L(f\otimes e(a/q),1)$ are known as {\it modular symbols} and the above equality has been used for computations of the central values of $L$-functions of base-change of an elliptic curve over $\Q$. \\
If we furthermore assume that $f$ is a new form (in particular an eigenform for all Hecke operators), then we have the following generalization to non-primitive characters $\chi$.
\begin{lemma}
\label{BirchSteven}
Let $f\in \mathcal{S}_k(\Gamma_0(N))$ be a new form of even weight $k$ and $\chi$ a Dirichlet character mod $q$. Then we have
\begin{align}
\tau(\overline{\chi}^*)\nu(f, \chi^*,q/c(\chi)) L(f\otimes \chi^*,k/2)=\sum_{a\in (\Z/q\Z)^\times} \overline{\chi}(a )L(f\otimes e(a/q),k/2),
\end{align}
where $\chi^*$ mod $c(\chi)$ denotes the conductor of the Dirichlet character $\chi$ and
$$ \nu(f, \chi, n)= \sum_{\substack{n_1n_2n_3=n,\\ (n_1,q)=1}}\chi(n_1) \mu(n_1)\overline{\chi}(n_2) \mu(n_2) a_f(n_3)n_3^{1-k/2}.  $$
\end{lemma}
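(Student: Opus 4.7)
The plan is to expand both sides as Dirichlet series (valid for $\Re s > (k+1)/2$ and then analytically continued to $s=k/2$ via (\ref{integralrep})), reduce the right-hand side to the primitive case via a Gauss-sum computation, and match the result using Hecke multiplicativity of the newform $f$. Exchanging summations gives
$$\sum_{a\in(\Z/q\Z)^\times}\overline{\chi}(a)L(f\otimes e(a/q), k/2) = \sum_{n\geq 1}\frac{a_f(n)}{n^{k/2}}G(n), \qquad G(n):=\sum_{(a,q)=1}\overline{\chi^*}(a)\,e(na/q),$$
since $\overline{\chi}$ and $\overline{\chi^*}$ coincide on units modulo $q$. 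Writing $c:=c(\chi)$ and $m:=q/c$, the key intermediate identity is
$$G(n) \;=\; \tau(\overline{\chi^*})\sum_{\substack{e\mid(m,n)\\(m/e,\,c)=1}}\mu(m/e)\,\overline{\chi^*}(m/e)\,e\,\chi^*(n/e),$$
which I would establish by inserting $\II_{(a,q)=1}=\sum_{d\mid(a,q)}\mu(d)$, writing $a=db$, observing that $\overline{\chi^*}(d)$ vanishes unless $(d,c)=1$ (hence $d\mid m$), and evaluating the inner complete Gauss sum modulo $q/d$ via the classical identity $\sum_{b\bmod c}\overline{\chi^*}(b)e(n'b/c)=\chi^*(n')\tau(\overline{\chi^*})$ for primitive characters (which uses that $c\mid q/d$).

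Substituting $G(n)$ back and setting $n=en'$ reduces matters to the inner sum $\sum_{n'}a_f(en')\chi^*(n')/n'^{k/2}$. Under the hypothesis $(q,N)=1$ of Theorem \ref{recipr} (the context in which the lemma is applied) one has $(e,N)=1$ for every $e\mid m$, so the newform Hecke identity $a_f(en')=\sum_{d_0\mid(e,n'),\,(d_0,N)=1}d_0^{k-1}\mu(d_0)a_f(e/d_0)a_f(n'/d_0)$ applies without ramified corrections and factors the sum as $L(f\otimes\chi^*,k/2)\sum_{d_0\mid e}d_0^{k/2-1}\chi^*(d_0)\mu(d_0)a_f(e/d_0)$. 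Reparametrizing by $n_1=m/e$, $n_2=d_0$, $n_3=e/d_0$ turns the remaining double sum into a triple sum over $n_1 n_2 n_3=m$; the powers of $e$ and $d_0$ collapse neatly to $n_3^{1-k/2}$, the condition $(m/e,c)=1$ is redundant because $\overline{\chi^*}(n_1)\mu(n_1)$ already vanishes when $(n_1,c)>1$, and swapping the dummies $n_1\leftrightarrow n_2$ reproduces exactly $\nu(f,\chi^*,m)$, matching the left-hand side.

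I expect the Gauss-sum evaluation to be the main obstacle: two layers of Möbius inversion (one enforcing $(a,q)=1$, the other implicit in passing from the non-primitive character $\overline{\chi}$ to the primitive Gauss sum for $\overline{\chi^*}$) must be handled together with the divisibility constraint $c\mid q/d$. Once that closed form is in hand, the remaining matching is a bookkeeping exercise that crucially relies on $(q,N)=1$ so that no local correction factors appear at primes dividing both $m$ and $N$.
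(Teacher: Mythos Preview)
The paper does not prove this lemma itself; it cites \cite[Proposition 6.1]{No19}. Your Gauss-sum/Hecke approach is the standard route and is carried out correctly up to the final identification.

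The gap is in the last sentence. After reparametrizing and swapping $n_1\leftrightarrow n_2$ you obtain the \emph{unconstrained} triple sum
\[
\sum_{n_1n_2n_3=m}\chi^*(n_1)\mu(n_1)\,\overline{\chi^*}(n_2)\mu(n_2)\,a_f(n_3)\,n_3^{1-k/2},
\]
and claim this equals $\nu(f,\chi^*,m)$ because the side condition is ``redundant''. But the paper's constraint is $(n_1,q)=1$, not $(n_1,c)=1$; since every $n_1$ appearing divides $m\mid q$, the condition $(n_1,q)=1$ forces $n_1=1$, which is strictly stronger than what $\chi^*(n_1)\neq 0$ gives you. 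A direct check with $\chi$ principal modulo a prime $q$ (so $c=1$, $m=q$) makes this visible: the right-hand side of the lemma equals $\bigl(a_f(q)q^{1-k/2}-2\bigr)L(f,k/2)$, which matches \emph{your} unconstrained sum, whereas the paper's constrained $\nu(f,1,q)=a_f(q)q^{1-k/2}-1$ (the value quoted in the proof of Corollary~\ref{cor1}) does not. So your derivation is in fact correct and exposes an apparent misprint in the stated $\nu$; what fails is only your claim that the two expressions agree as written. As a smaller point, you have established the identity only under the extra hypothesis $(q,N)=1$, while the lemma is stated without it.
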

For a proof see \cite[Proposition 6.1]{No19}. 
\begin{remark}
This formula was also the essential ingredient for Bruggeman and Diamantis in \cite{BrDia16}, where they give an explicit formula for the constant Fourier coefficient of Eisenstein series twisted by modular symbols.
\end{remark} 
From the above formula we conclude, using orthogonality of characters on the finite group $ (\Z/c\Z)^\times$, the following identity.
\begin{cor}\label{Additivetwist}
Let $f\in \mathcal{S}_k(\Gamma_0(N))$ be a new form of even weight $k$. Then we have
\begin{align}
L(f\otimes e(a/q),k/2)=  \frac{1}{\varphi(q)}\sum_{\chi \text{ mod }q}      \tau(\overline{\chi}^*)\nu(f, \chi^*,c/c(\chi)) L(f\otimes \chi^*,k/2) \chi(a),
\end{align}
with $\chi^*,c(\chi)$ and $\nu$ as above.
\end{cor}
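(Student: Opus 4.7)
The plan is to invoke Fourier inversion on the finite abelian group $(\Z/q\Z)^\times$, using Lemma \ref{BirchSteven} as the input side of the inversion. Concretely, the Lemma expresses the quantity
$$A(\chi):=\tau(\overline{\chi}^*)\nu(f, \chi^*,q/c(\chi)) L(f\otimes \chi^*,k/2)$$
as the discrete Fourier transform (with respect to the character $\chi$) of the function
$$a\mapsto L(f\otimes e(a/q),k/2)$$
on $(\Z/q\Z)^\times$. The corollary simply asserts the inverse Fourier transform formula, so the proof is a one-line application of orthogonality of characters.

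In detail, I would start from the identity of Lemma \ref{BirchSteven}, multiply both sides by $\chi(a)$ for a fixed $a\in(\Z/q\Z)^\times$, and sum over all Dirichlet characters $\chi$ mod $q$. Swapping the two finite sums on the right-hand side yields
$$\sum_{\chi \bmod q} A(\chi)\chi(a) \;=\; \sum_{b\in(\Z/q\Z)^\times} L(f\otimes e(b/q),k/2)\sum_{\chi\bmod q}\chi(a)\overline{\chi}(b).$$
The inner character sum equals $\varphi(q)$ when $b\equiv a \pmod q$ and vanishes otherwise, so only the term $b=a$ survives on the right. Dividing by $\varphi(q)$ gives exactly the stated formula (one should read the typo $c/c(\chi)$ in the statement as $q/c(\chi)$, consistent with Lemma \ref{BirchSteven}).

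There is no real obstacle: the nontrivial arithmetic content, namely the expression of $A(\chi)$ as a sum of additive twists with the weights $\nu(f,\chi^*,q/c(\chi))$ accounting for imprimitivity, has already been absorbed into Lemma \ref{BirchSteven}. The only ingredient beyond that is the standard orthogonality relation
$$\sum_{\chi\bmod q}\chi(a)\overline{\chi}(b)=\varphi(q)\cdot \mathbf{1}_{a\equiv b\,(q)},\qquad a,b\in(\Z/q\Z)^\times,$$
which makes the inversion immediate. Thus the corollary follows from Lemma \ref{BirchSteven} and character orthogonality, valid for any $a$ coprime to $q$.
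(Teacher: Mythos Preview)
Your proposal is correct and matches the paper's own argument exactly: the corollary is deduced from Lemma \ref{BirchSteven} by orthogonality of characters on $(\Z/q\Z)^\times$, i.e.\ by Fourier inversion. Your observation that $c/c(\chi)$ should read $q/c(\chi)$ is also correct.
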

Thus for $l=o(q)$, it follows from (\ref{mainreci}) that
$$ L(f\otimes e(l/q),k/2)-L(f\otimes e(-q/(Nl)),k/2)= L(f,k/2)+o(1),  $$
as $q\rightarrow \infty$. This is in sharp contrast with the average behavior since we know from the distribution result \cite[Theorem 1.4]{No19} that for $q$ fixed, the number 
$$L(f\otimes e(l/q),k/2),$$ 
is typically of magnitude $\sqrt{\log q}$. 

\section{Quantum modularity of additive twists} 
The notion of {\it quantum modular forms} was introduced by Zagier in \cite{Zagier10} with one of the first examples appearing in earlier work with Lawrence \cite{LawZa99} on certain symmetries of quantum invariants of 3-knots. In Zagier's original definition quantum modular forms are maps $\mathbb{P}^1(\Q)\backslash S\rightarrow \C$ with $S$ a finite set which satisfy a variation of the modular transformation rule for congruence subgroups $\Gamma_0(N)\subset \SL_2(\Z)$ acting on $\mathbb{P}^1(\Q)$. One should think of the equivalence classes of $\mathbb{P}^1(\Q)$ under the action of $\Gamma_0(N)$ as the boundary of the symmetric space $\Gamma_0(N) \backslash\H$.\\ 
In this paper we study quantum modularity for general co-finite, discrete subgroups $\Gamma\subset \SL_2(\R)$ with a cusp at $\infty$. Although quantum modularity for general Fuchsian groups of the first kind has not been explicitly defined before in the literature, the definition is implicit in the introduction of \cite{Zagier10}. To make this definition, denote by $s(\Gamma)\subset \R\cup \{\infty\}$ the set of cusps of $\Gamma$ (i.e. fixed points of parabolic motions of $\Gamma$). In particular if $\Gamma=\Gamma_0(N)$, we have $s(\Gamma)=\Q\cup \{\infty\}=\mathbb{P}^1(\Q)$. Then we have the following definition. 

\begin{defi}\label{qmoddef}Let $\Gamma$ be a discrete, co-finite subgroup of ${\rm SL}_2(\R)$ with a cusp at $\infty$ of width 1 and consider a map $f:s(\Gamma)\backslash S\rightarrow \C$ with $S$ a finite set. Then $f$ is a {\bf quantum modular form} of weight $k$ for $\Gamma$ if the following holds; for all $\gamma\in\Gamma$ the function $g_\gamma(r):s(\Gamma)\backslash (S\cup \gamma^{-1}S)\rightarrow \C$ defined by
\begin{align}\label{quantum} g_\gamma(r):=f(\gamma r)-j(\gamma,r)^k f(r), \end{align}
extends to a \underline{continuous} function $\mathbb{P}^1(\R)\backslash (S\cup \gamma^{-1}S) \rightarrow \C $. \end{defi}
\begin{remark}\label{contex}Note that we have a large class of uninteresting examples coming from restrictions of continuous maps $\mathbb{P}^1(\R)\backslash S\rightarrow \C$ with $S$ a finite set.\end{remark}
\begin{remark} Here "continuous" can be replaced by different notions of regularity ($\mathcal{C}^1$, smooth, analytic, $\ldots$). \end{remark}

\subsection{Proof of quantum modularity}
In this section we present a proof of the quantum modularity for the central values of additive twists of cuspidal $L$-functions. The proof uses the integral representation of the additive twist (\ref{integralrep}) and is similar in spirit to the treatment of Eichler integrals of half-integral cusp forms by Bringmann and Rolen in \cite{BriRol16}. One can also consider the Eichler integrals of an integral weight $k$ cusp form $f$, which corresponds to the special value $L(f\otimes e(r), k-1)$ of the additive twists. For $k>2$ this is however not an interesting example in our context since this is the restriction of a continuous function\footnote{Quantum modularity of Eichler integrals of integral weight cusp forms were studied by Lee in \cite{Lee18}. In this work the notion of regularity used in the definition of quantum modular forms is {\it smooth} instead of continuous. Now one gets a non-trivial result since the Eichler integrals do not define a smooth function, but the discrepancy (\ref{quantum}) is even a polynomial in this case.} (because of the absolute convergence of the Dirichlet series (\ref{additivetwist}) at the special value $s=k-1$) as was also noted in \cite[Section 1.4.1]{BeDr19}. \\
Using \cite[Theorem 1.4]{No19} one can see that the the central value $s=k/2$ of the additive twists of the $L$-function of an integral weight cusp form $f$ considered as a function of the twisting parameter $r\in s(\Gamma)$, is not the restriction of a continuous function. Our result is that this does however define a quantum modular form.
\begin{thm}[Quantum modularity]\label{quantumgeneral}
Let $\Gamma$ be a discrete, co-finite subgroup of $\SL_2(\R)$ with a cusp at $\infty$ of width 1 and let $f\in \mathcal{S}_k(\Gamma)$ be a cusp form of even weight $k$. Then the map $s(\Gamma)\backslash\{\infty\} \rightarrow \C$ defined by
$$ r\mapsto L(f\otimes e(r),k/2) $$
is a quantum modular form of weight zero for $\Gamma$. More precisely for $\gamma\in \Gamma$ and $r\in s(\Gamma)\backslash\{\infty\}$ with $\gamma r\neq \infty$ and $\gamma \infty\neq \infty$, we have
\begin{align}
\nonumber L(f\otimes e(\gamma r),k/2)-&L(f\otimes e(r),k/2)\\
\nonumber =L(f\otimes e(\gamma \infty),k/2)&+\sum_{j=1}^{k/2-1} \frac{\binom{k/2-1}{j}}{(c^{-1}j(\gamma, r))^j}\frac{(-2\pi i)^{-j}\Gamma(k/2+j)}{\Gamma(k/2)}L(f\otimes e(r), k/2+j)\\
\label{exactformula1}&+\sum_{j=1}^{k/2-1} \frac{\binom{k/2-1}{j}}{(cj(\gamma,r))^j}\frac{(-2\pi i)^j\Gamma(k/2-j)}{\Gamma(k/2)}L(f\otimes e(\gamma \infty),k/2-j),
\end{align}
where $c$ is the lower-left entry of $\gamma$.
\end{thm}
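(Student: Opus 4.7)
The plan is to evaluate $L(f \otimes e(\gamma r), k/2)$ using the integral representation (\ref{integralrep}). After the substitution $z = \alpha + iy$, this reads
$$L(f \otimes e(\alpha), k/2) = \frac{(2\pi)^{k/2}}{i^{k/2}\,\Gamma(k/2)} \int_{\alpha}^{\alpha + i\infty} f(z)(z-\alpha)^{k/2-1}\, dz,$$
and the integration contour may be deformed freely within the upper half-plane thanks to the cuspidality of $f$. I would route the path from $\gamma r$ to $i\infty$ through the real cusp $\gamma\infty$, parametrizing the first arc as $z = \gamma(r+it)$ for $t\in[0,\infty)$ (which lies in $\mathbb{H}$ and joins $\gamma r$ to $\gamma\infty$) and taking the second arc to be the vertical line from $\gamma\infty$ up to $i\infty$. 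Exponential decay of $f$ at both cusps makes every integral absolutely convergent and justifies the deformation as well as all subsequent interchanges of sum and integral.

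On the first arc I substitute $z = \gamma w$ and invoke $f(\gamma w) = j(\gamma,w)^k f(w)$, $dz = j(\gamma,w)^{-2}dw$, together with the elementary identity $\gamma w - \gamma r = (w-r)/\bigl(j(\gamma,w)\,j(\gamma,r)\bigr)$. The integrand collapses to $j(\gamma,r)^{-(k/2-1)}\, j(\gamma,w)^{k/2-1}\, f(w)\,(w-r)^{k/2-1}\, dw$. Writing $j(\gamma,w) = c(w-r) + j(\gamma,r)$ and expanding binomially produces a sum over $j = 0, \ldots, k/2-1$ of integrals $\int_r^{r+i\infty} f(w)(w-r)^{k/2-1+j}\, dw$, each a Gamma-factor multiple of $L(f \otimes e(r), k/2+j)$ by the same integral representation. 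The $j=0$ piece reproduces $L(f\otimes e(r), k/2)$, while the $j\geq 1$ pieces yield precisely the first sum in (\ref{exactformula1}): the rational factors $c^j/j(\gamma,r)^j$ match $1/(c^{-1}j(\gamma,r))^j$, and one recognizes $i^j/(2\pi)^j = (-2\pi i)^{-j}$.

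On the second arc I use $z - \gamma r = (z - \gamma\infty) + 1/(c\, j(\gamma,r))$ (which follows from $\gamma\infty - \gamma r = 1/(c\,j(\gamma,r))$) and expand $(z-\gamma r)^{k/2-1}$ binomially around $z = \gamma\infty$. This converts the contribution into a sum of terms proportional to $\int_{\gamma\infty}^{\gamma\infty + i\infty} f(z)(z-\gamma\infty)^m\, dz$, each a Gamma-factor multiple of $L(f\otimes e(\gamma\infty), m+1)$. Reindexing $m = k/2-1-j$, the top term $m = k/2-1$ contributes $L(f\otimes e(\gamma\infty), k/2)$ and the remaining terms reproduce the second sum in (\ref{exactformula1}) with the stated coefficients $\binom{k/2-1}{j}(c\,j(\gamma,r))^{-j}(-2\pi i)^{j}\Gamma(k/2-j)/\Gamma(k/2)$. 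Combining both arcs and dividing out the global normalization $i^{k/2}\Gamma(k/2)/(2\pi)^{k/2}$ yields (\ref{exactformula1}) exactly as stated.

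The quantum-modularity conclusion is then immediate. By (\ref{hecke}) the Dirichlet series $L(f\otimes e(r), k/2+j)$ converges absolutely for $j\geq 1$ (since $k/2+j > (k+1)/2$), hence defines a continuous function of $r\in \R$; the values $L(f\otimes e(\gamma\infty), k/2-j)$ are constants in $r$; and the rational factors $j(\gamma,r)^{-j}$ are continuous on $\R \setminus \{-d/c\} = \R \setminus \{\gamma^{-1}\infty\}$. Hence the right-hand side of (\ref{exactformula1}) extends continuously to $\mathbb{P}^1(\R) \setminus \{\infty, \gamma^{-1}\infty\}$, verifying Definition \ref{qmoddef}. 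The main difficulty is purely organizational: two binomial expansions and a pile of $\Gamma$, $2\pi$, and $i$ factors have to be tracked consistently, but once that bookkeeping is arranged the computation is mechanical.
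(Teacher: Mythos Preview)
Your proof is correct and follows essentially the same route as the paper's: integral representation, contour deformation through $\gamma\infty$, the substitution $z\mapsto\gamma w$ on the first arc with the identity $\gamma w-\gamma r=(w-r)/(j(\gamma,w)j(\gamma,r))$, and binomial expansion of $j(\gamma,w)^{k/2-1}$ and of $(z-\gamma r)^{k/2-1}$ about $z=\gamma\infty$ on the two arcs respectively. The only (trivial) omission is the case $\gamma\infty=\infty$, which the paper dispatches in one line via the $1$-periodicity of $f$.
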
 
\begin{proof}
We have to show continuity of the discrepancy (\ref{quantum}) for all $\gamma\in\Gamma$. First of all if $\gamma \infty=\infty $ then it is easy to see that 
$$L(f\otimes e(\gamma r),k/2)=L(f\otimes e(r),k/2),$$
for all $r\in s(\Gamma)\backslash \{\infty\}$ since $f$ is 1-periodic. Thus quantum modularity for such $\gamma$ is clear.\\ 
Recall that by (\ref{hecke}), the additive twists $L(f\otimes e(x), k/2+j)$ for $j>0$ define continuous functions in $x\in \R$. Thus for $\gamma$ fixed the two sums in (\ref{exactformula1}) both extend to continuous functions $\R\backslash \{\gamma^{-1}\infty\}\rightarrow \C$. This shows that it suffices to prove (\ref{exactformula1}).\\
To prove (\ref{exactformula1}) we begin with the following integral representation
\begin{align*}  L(f\otimes e(\gamma r),k/2) &= \frac{(2\pi)^{k/2}}{\Gamma(k/2)} \int_{0}^{\infty} f(\gamma r+iy) y^{k/2-1}dy\\
&= \frac{(-2\pi i)^{k/2}}{\Gamma(k/2)} \int_{\gamma r}^{i\infty} f(z) (z-\gamma r)^{k/2-1}dz, \end{align*}
where the integral is taken along the vertical line from $\gamma r$ to $i\infty$. Now the integrand is holomorphic and we can apply Cauchy's theorem to write
$$L(f\otimes e(\gamma r),k/2) =  \frac{(-2\pi i)^{k/2}}{\Gamma(k/2)} \left(   \int_{\gamma r}^{\gamma \infty} f(z) (z-\gamma r)^{k/2-1}dz+ \int_{\gamma \infty}^{\infty} f(z) (z-\gamma r)^{k/2-1}dz  \right).$$
We will now treat the two integrals separately. In the first integral we do the change of variable $z\mapsto \gamma z$ and use the fact that 
$$ \gamma z-\gamma r= \frac{z-r}{j(\gamma,z)j(\gamma, r)}, $$
to arrive at
\begin{align*}
 &\int_{\gamma r}^{\gamma \infty} f(z) (z-\gamma r)^{k/2-1}dz\\
 &=  \int_{r}^{ i\infty} f(\gamma z) (\gamma z-\gamma r)^{k/2-1}\frac{dz}{j(\gamma , z)^2}\\
 &=  \left(\frac{c}{j(\gamma, r)}\right)^{k/2-1}\int_{r}^{ i\infty} f(z) \left((z-r)(z-r+\frac{j(\gamma,r)}{c})\right)^{k/2-1}dz\\
 &= \sum_{j=0}^{k/2-1} \binom{k/2-1}{j}\left( \frac{c}{j(\gamma, r)}\right)^j \int_r^{i\infty} f(z) (z-r)^{k/2+j-1}dz\\
 &= \sum_{j=0}^{k/2-1} \binom{k/2-1}{j}\left( \frac{c}{j(\gamma, r)}\right)^j \frac{\Gamma(k/2+j)}{(-2\pi i)^{k/2+j}}L(f\otimes e(r),k/2+j).
\end{align*}
A similar treatment of the other integral gives
\begin{align*}
 &\int^{i\infty}_{\gamma \infty} f(z) (z-\gamma \infty +(\gamma \infty-\gamma r))^{k/2-1}dz\\
 &= \sum_{j=0}^{k/2-1} \binom{k/2-1}{j}\left( \frac{1}{cj(\gamma, r)}\right)^{j} \int^{i\infty}_{\gamma \infty} f(z) (z-\gamma \infty)^{k/2-j-1}dz\\
&= \sum_{j=0}^{k/2-1} \binom{k/2-1}{j}\left( \frac{1}{cj(\gamma, r)}\right)^{j} \frac{\Gamma(k/2-j)}{(-2\pi i)^{k/2-j}} L(f\otimes e(\gamma \infty),k/2-j),
\end{align*}
which finishes the proof.
\end{proof}
\begin{remark} For $k=2$ we observe that the right-hand side of (\ref{exactformula1}) is just a constant. From this it follows immediately that the central value of the additive twists (i.e. modular symbols) define a {\it strong quantum modular form} in the sense of Zagier \cite{Zagier10}.
\end{remark}
\begin{remark}
The proof and statement of Theorem \ref{quantumgeneral} is very similar to \cite[Lemma 5.1]{No19}, which is used to reduce the study of Eisenstein series twisted by (generalized) modular symbols to the study of certain "completions" in the sense of \cite[page 6]{BrDia16}. This was used to determine the distribution of central values of additive twists in \cite{No19} and thus there seem to be some similarities with the methods in \cite{BeDr19}, which would be interesting to understand better.
\end{remark}
If $\Gamma=\Gamma_0(N)$ is a congruence group and we assume that $f\in \mathcal{S}_k(\Gamma_0(N))$ is a new form, we get a similar result for the Fricke involution, defined as
\begin{align}\label{fricke} Wf(z):= N^{-k/2}z^{-k} f(H_Nz),\end{align}
where
$$  H_N=\begin{pmatrix} 0& -1 \\ N & 0  \end{pmatrix}.$$
The proof is essentially the same and we will omit it.
\begin{thm}\label{quantumhecke}
Let $f\in \mathcal{S}_k(\Gamma_0(N))$ be a cuspidal new form of even weight $k$ and level $N$. Then we have for all $r\in \Q\backslash \{0\}$ that
\begin{align}
\nonumber &L(f\otimes e(-1/(Nr)),k/2)-L(f\otimes e(r),k/2)\\
\nonumber &=L(f,k/2)+\omega_f\sum_{j=1}^{k/2-1} \frac{\binom{k/2-1}{j}}{r^j}\frac{(-2\pi i)^{-j}\Gamma(k/2+j)}{\Gamma(k/2)}L(f\otimes e(r), k/2+j)\\
\label{exactformula2}&\qquad \qquad\qquad+\sum_{j=1}^{k/2-1} \frac{\binom{k/2-1}{j}}{(N r)^j}\frac{(-2\pi i)^j\Gamma(k/2-j)}{\Gamma(k/2)}L(f,k/2-j),
\end{align}
where $\omega_f=\pm 1$ is the eigenvalue of $f$ under the Fricke involution $W$.
\end{thm}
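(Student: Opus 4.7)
The plan is to imitate the proof of Theorem \ref{quantumgeneral} essentially verbatim, replacing a $\gamma\in\Gamma$ by the normalized Fricke element $\tilde H_N:=N^{-1/2}H_N\in \SL_2(\R)$. This matrix has Möbius action $\tilde H_N r=-1/(Nr)$, sends $\infty$ to $0$, and satisfies $j(\tilde H_N,z)=\sqrt{N}\,z$. The Fricke eigenvalue condition $Wf=\omega_f f$ rewrites as the twisted automorphy relation
$$ f(\tilde H_N z)=\omega_f\, j(\tilde H_N,z)^k f(z), $$
which differs from the $\Gamma$-automorphy used in the proof of Theorem \ref{quantumgeneral} only by the scalar $\omega_f$.

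First I would start from the integral representation (\ref{integralrep}) at the cusp $\tilde H_N r$:
$$ L(f\otimes e(-1/(Nr)),k/2)=\frac{(-2\pi i)^{k/2}}{\Gamma(k/2)}\int_{\tilde H_N r}^{i\infty} f(z)(z-\tilde H_N r)^{k/2-1}\,dz, $$
and split the contour by Cauchy's theorem at $\tilde H_N\infty=0$, producing one integral from $\tilde H_N r$ to $0$ and one from $0$ to $i\infty$.

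For the first integral, I would perform the change of variable $z\mapsto \tilde H_N z$ exactly as in the proof of Theorem \ref{quantumgeneral}, using the twisted automorphy above in place of standard $\Gamma$-automorphy; this introduces a global factor of $\omega_f$. The substitution $c/j(\tilde H_N,r)=\sqrt{N}/(\sqrt{N}\,r)=1/r$ then produces a sum over $j$ of terms weighted by $r^{-j}$ featuring $L(f\otimes e(r),k/2+j)$. For the second integral, the displacement $\tilde H_N\infty-\tilde H_N r=1/(Nr)$ invites the binomial expansion $(z+1/(Nr))^{k/2-1}=\sum_{j=0}^{k/2-1}\binom{k/2-1}{j}z^{k/2-1-j}(Nr)^{-j}$, and recognizing $\int_0^{i\infty} f(z)z^{s-1}\,dz=\Gamma(s)(-2\pi i)^{-s}L(f,s)$ yields the sum weighted by $(Nr)^{-j}$ featuring $L(f,k/2-j)$, with the $j=0$ term contributing the constant $L(f,k/2)$.

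The only real work is the bookkeeping: one must verify that the $\sqrt{N}$-powers from $j(\tilde H_N,z)$ cancel cleanly against those from the Fricke factor $j(\tilde H_N,z)^k$, leaving the clean weights $r^{-j}$ and $(Nr)^{-j}$ stated in the theorem. Continuity of the resulting discrepancy as a function of $r\in \R\setminus\{0\}$ is then automatic, exactly as in Theorem \ref{quantumgeneral}, from the absolute convergence of $L(f\otimes e(r),s)$ at $s=k/2+j$ for $j\geq 1$ guaranteed by Hecke's bound (\ref{hecke}).
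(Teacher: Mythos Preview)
Your approach is exactly what the paper intends: it explicitly says ``The proof is essentially the same and we will omit it,'' and your outline---run the contour argument of Theorem \ref{quantumgeneral} with $\tilde H_N=N^{-1/2}H_N$ in place of $\gamma$, using $f(\tilde H_N z)=\omega_f\,j(\tilde H_N,z)^k f(z)$---reproduces that proof step for step, with $c/j(\tilde H_N,r)=1/r$ and $1/(c\,j(\tilde H_N,r))=1/(Nr)$ giving precisely the stated weights. One small point your bookkeeping will surface: the $j=0$ term of the first sum also carries the factor $\omega_f$, so the identity one actually obtains has $-\,\omega_f\,L(f\otimes e(r),k/2)$ on the left (which is consistent with how the result is used in Theorem \ref{recipr}).
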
 

\subsection{Quantum modularity at $\infty$ and the functional equation} 
A natural question to ask is whether we can extend our quantum modular forms to $\infty$. This is equivalent to whether we can assign a value at $\infty$ such that the right-hand side of (\ref{exactformula1}) converges, as respectively $r\rightarrow \infty$ and $ r\rightarrow \gamma^{-1}\infty$, to the left-hand side of (\ref{exactformula1}) with respectively $r=\infty$ and $ r= \gamma^{-1}\infty$. In the special case when $k=2$ this can be done by putting  $ L(f\otimes e(\infty),1)=0 $. It turns out that quantum modularity at $\infty$ amounts to the functional equation for $L(f\otimes e(\gamma \infty),s)$ at the central point.
\begin{thm}
Let $\Gamma$ be a discrete, co-finite subgroup of $SL_2(\R)$ with a cusp at $\infty$ of width 1 and let $f\in \mathcal{S}_2(\Gamma)$. Then the map $s(\Gamma) \rightarrow \C$ defined by
$$ r\mapsto L(f\otimes e(r),1) $$
where we put $ L(f\otimes e(\infty),1)=0 $, is a quantum modular form of weight zero for $\Gamma$.
\end{thm}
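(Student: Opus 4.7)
The plan is, having extended the map by $L(f\otimes e(\infty),1)=0$, to verify the continuity condition of Definition \ref{qmoddef} case-by-case in $\gamma\in\Gamma$. Taking $S=\emptyset$, the weight-zero discrepancy
\[
g_\gamma(r):=L(f\otimes e(\gamma r),1)-L(f\otimes e(r),1)
\]
is then well-defined on all of $s(\Gamma)$, and what must be shown is that for each $\gamma$ it extends to a continuous function $\mathbb{P}^1(\R)\to\C$. I would split on whether $\gamma$ fixes the cusp at infinity.

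The case $\gamma\infty=\infty$ is immediate: since $\infty$ has width $1$, the element $\gamma$ acts on $\R$ by integer translation (possibly composed with $-I$), so $g_\gamma\equiv 0$ by $1$-periodicity of the Fourier expansion of $f$, and the zero function trivially extends. For $\gamma\infty\neq\infty$ the main tool is Theorem \ref{quantumgeneral} specialized to $k=2$: both explicit sums in (\ref{exactformula1}) run over $j=1,\ldots,k/2-1=0$ and are therefore empty, leaving the remarkably clean identity
\[
g_\gamma(r)=L(f\otimes e(\gamma\infty),1)\qquad\text{for every }r\in s(\Gamma)\setminus\{\infty,\gamma^{-1}\infty\}.
\]
As a constant, this extends trivially to $\mathbb{P}^1(\R)\setminus\{\infty,\gamma^{-1}\infty\}$, and it only remains to check that the two boundary values forced by the convention $L(f\otimes e(\infty),1)=0$ agree with this constant.

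At $r=\infty$ one computes $g_\gamma(\infty)=L(f\otimes e(\gamma\infty),1)-0$, which matches immediately. At $r=\gamma^{-1}\infty$, where $\gamma r=\infty$, one obtains $g_\gamma(\gamma^{-1}\infty)=-L(f\otimes e(\gamma^{-1}\infty),1)$, and so continuity reduces to the single identity
\[
L(f\otimes e(\gamma\infty),1)+L(f\otimes e(\gamma^{-1}\infty),1)=0.
\]
This is precisely the content of the functional equation (\ref{FE}) at the central point $s=k/2=1$ for $k=2$: the cusps $\gamma\infty=a/c$ and $\gamma^{-1}\infty=-d/c$ share the same denominator $|c|$, so the $\Lambda$-factors $(c/2\pi)^s\Gamma(s)$ on the two sides coincide at $s=1$, while the overall sign $(-1)^{k/2}=-1$ supplies exactly the needed cancellation. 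The only real content of the argument, beyond the direct application of Theorem \ref{quantumgeneral}, is this last identification; and it is in this sense that quantum modularity at $\infty$ encodes precisely the functional equation of the additive twists at the central point, as suggested by the heading of the subsection.
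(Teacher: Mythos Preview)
Your proof is correct and follows essentially the same route as the paper: specialize Theorem \ref{quantumgeneral} to $k=2$ so that (\ref{exactformula1}) collapses to the constant $L(f\otimes e(\gamma\infty),1)$, then verify the two boundary values at $r=\infty$ and $r=\gamma^{-1}\infty$, the latter reducing to the functional equation (\ref{FE}) at the central point. Your write-up is in fact slightly more explicit than the paper's in spelling out why the $\Lambda$-factors match and why the sign $(-1)^{k/2}=-1$ gives the required cancellation.
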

\begin{proof}
First of all if $\gamma \infty=\infty$ then the result is clear for all $k$ even. Furthermore when $k=2$, (\ref{exactformula1}) reduces to
\begin{align}\label{infty}L(f\otimes e(\gamma r),1)-L(f\otimes e(r),1)=L(f\otimes e(\gamma \infty),1) ,\end{align}
and since the right-hand side is constant we can ignore convergence completely. \\
Using Theorem \ref{quantumgeneral} we only need to check that (\ref{infty}) still holds at $r=\infty$ and $r=\gamma^{-1}\infty$. The first case is immediate and the second case reduces to
$$ L(f\otimes e(\gamma^{-1}\infty),1)\overset{?}{=}-L(f\otimes e(\gamma \infty),1), $$
which is exactly the functional equation (\ref{FE}) at the central point $s=k/2=1$. This finishes the proof.
\end{proof}
\begin{remark}This seems to be a very special phenomena for $k=2$ and the author was not able to prove anything for higher weights. We even have numerical data suggesting that it is not true for the Ramanujan $\Delta$-function. It would be interesting to investigate quantum modularity at $\infty$ for other quantum modular forms in the litterature.\end{remark}

\section{From quantum modularity to reciprocity laws} 
In this final section, we will prove the reciprocity laws Theorem \ref{recipr} and Corollary \ref{cor1} using the quantum modularity for the Fricke involution of additive twists proved above.
\subsection{Proof of Theorem \ref{recipr} and Corollary \ref{cor1}} 
By combining Corollary \ref{Additivetwist} and Theorem \ref{quantumhecke} (with $r=-q/(lN)$) we get an explicit formula for the left-hand side of (\ref{mainreci}). The expression for the error term on the right-hand side of (\ref{mainreci}) follows by the following estimate of the right-hand side of (\ref{exactformula2});
\begin{align*}
&L(f,k/2)+\omega_f\sum_{j=1}^{k/2-1} \frac{\binom{k/2-1}{j}}{r^j}\frac{(-2\pi i)^{-j}\Gamma(k/2+j)}{\Gamma(k/2)}L(f\otimes e(r), k/2+j)\\
&+\sum_{j=1}^{k/2-1} \frac{\binom{k/2-1}{j}}{(N^2 r)^j}\frac{(-2\pi i)^j\Gamma(k/2-j)}{\Gamma(k/2)}L(f,k/2-j)\\
&=L(f,k/2)+O_{f}\left( \sum_{j=1}^{k/2-1}  r^{-j} \right) \\
&=L(f,k/2)+ O_{f}(r^{-1}),
\end{align*}
where we again used the following uniform bound 
$$|L(f\otimes e(r), k/2+j)|\leq \sum_{n\geq 1}\frac{|a_f(n)|}{n^{k/2+1}}<\infty,$$
for $j\geq 1$. This proves Theorem \ref{recipr}. \\
Furthermore if $l$ and $q$ are both primes then the only non-primitve character modulo $l$ and $q$ are the principal characters. Now using Deligne's bound $a_f(n)\ll d(n)n^{(k-1)/2}$ on the Fourier coefficients of $f$, we derive  
$$ \tau(\overline{\chi_0}^*)\nu(f, \chi_0^*,q/c(\chi_0))=\nu(f, 1,q)=a_f(q)q^{1-k/2}-1 \ll_f q^{1/2}, $$ 
and similarly for $l$. Using that 
$$ \frac{1}{\varphi(q)}=\frac{1}{\varphi^*(q)}+O(q^{-2}), $$
we conclude the proof of Corollary \ref{cor1}.
\bibliography{/Users/Soemanden/Documents/MatematikSamlet/Bib-tex/mybib}
\bibliographystyle{amsplain}
\end{document}